\newtheorem{theorem}{Theorem}[section]
\newtheorem{lem}[theorem]{Lemma}
\newtheorem{coro}[theorem]{Corollary}
\newtheorem{fact}[theorem]{Fact}
\theoremstyle{definition}
\newtheorem{defi}[theorem]{Definition}
\newtheorem{ex}[theorem]{Example}
\newtheorem{rem}[theorem]{Remark}
\numberwithin{equation}{section}
\numberwithin{theorem}{section}
\newdimen{\standardlabelwidth}
\newcommand{\standardlabel}[1]{#1\kern\standardlabelwidth}
\begin{document}

\title[On $U$-polygons of class $c\geq 4$ in planar
  point sets]{On the existence of $U$-polygons\\ of class $c\geq 4$ in planar
  point sets}
\author{Christian Huck}
\address{Department of Mathematics and Statistics\\
  The Open University\\ Walton Hall\\ Milton Keynes\\ MK7 6AA\\
   United Kingdom}
\email{c.huck@open.ac.uk}
\thanks{This work was supported by EPSRC via Grant EP/D058465/1. It is a pleasure to thank the CRC 701 at the University of
  Bielefeld for support during a stay in June 2007, where part
  of the manuscript was written.}

\begin{abstract}
For a finite set $U$ of directions in the Euclidean plane, a convex
non-degenerate polygon $P$ is called a $U$-polygon if every
line parallel to a direction of $U$ that meets a vertex of $P$ also
meets another vertex of $P$. We characterize the numbers of edges of $U$-polygons of class $c\geq
4$ with all their vertices in certain subsets of the plane and derive
explicit results in the case of cyclotomic model sets.   
\end{abstract}

\maketitle

\section{Introduction}
\label{sec1}
The (discrete parallel)
  $X$-ray of a finite subset $F$ of the Euclidean plane in direction $u$ is the
corresponding line sum function that gives
  the numbers of points of $F$ on each line parallel to $u$. It was
  shown in~\cite[Proposition 4.6]{H1} that the convex subsets of an {\em algebraic Delone set} $\varLambda$ are determined by
their discrete parallel $X$-rays in the directions of a set $U$
  of at least two pairwise non-parallel 
$\varLambda$-directions (i.e., directions parallel to
  non-zero interpoint vectors of $\varLambda$) if and only if there is no $U$-polygon with
all its vertices in $\varLambda$. By~\cite[Lemma 4.5]{H1}, there always exists a $U$-polygon with
all its vertices in $\varLambda$ if $U$ is a set of at most three pairwise non-parallel 
$\varLambda$-directions. This leads to the question which
$U$-polygons exist with all their vertices in
$\varLambda$ for sets $U$ of four or more pairwise non-parallel 
$\varLambda$-directions. We refer the reader to~\cite{GG,HK,H0,H2,H1,H} for more on discrete
  tomography and~\cite{G} for the role of $U$-polygons in geometric
  tomography, where the $X$-ray of a compact convex set in a
  direction gives the lengths of all chords of the
  set in this direction. Dulio and Peri have introduced the notion
of {\em class} of a $U$-polygon and demonstrated that for planar
  {\em lattices} $L$ the numbers of edges of $U$-polygons of class $c\geq 4$ with
all their vertices in $L$ are precisely $8$ and $12$;
cf.~\cite[Theorem 12]{DP}. As a
first step beyond the case of planar lattices, this text
provides a generalization of this result to planar
sets that are non-degenerate in some sense and satisfy
a certain affinity condition on finite scales (Theorem~\ref{charac}). It turns out that, for
these sets $\varLambda$, the existence
of $U$-polygons of class $c\geq 4$ with
all their vertices in $\varLambda$ is equivalent to the existence of
certain {\em affinely regular polygons} with all their vertices in
$\varLambda$, a problem that was addressed in~\cite{H}. The obtained
characterization of numbers of vertices of $U$-polygons of class $c\geq 4$ with
all their vertices in $\varLambda$ can be expressed in terms of a simple inclusion of
real field extensions of $\mathbbm{Q}$ and particularly applies to
\emph{algebraic Delone sets}, thus including {\em cyclotomic model
  sets}, which form an important class of planar {\em mathematical
  quasicrystals}; cf.~\cite{B,BM}. For cyclotomic model
  sets $\varLambda$, the numbers of vertices of $U$-polygons of class $c\geq 4$ with
all their vertices in $\varLambda$ can be expressed by a
simple divisibility condition (Corollary~\ref{th1mod}). In particular,
the above result on lattice
$U$-polygons of class $c\geq 4$ by Dulio and Peri is contained as a special case (Corollary~\ref{cor2}(a)).    

\section{Definitions and preliminaries}
\label{sec2}

Natural numbers are always assumed to be positive and the set of
rational primes is denoted by $\mathcal{P}$. Primes $p\in\mathcal{P}$
for which the number $2p+1$ is prime as well are called \emph{Sophie
  Germain primes}. We denote by $\mathcal{P}_{\rm SG}$ the set of
Sophie Germain primes. The first few ones are
$$
2,3,5,11,23,29,41,53,83,89,113,131,173,179,191,233,239,\dots;$$ see entry A005384 of~\cite{Sl}
	   for further details. The group of units of a
given ring $R$ is denoted by $R^{\times}$. As usual, for a complex
number $z\in\mathbbm{C}$, $\vert z\vert$ denotes the complex absolute value $\vert z\vert=\sqrt{z\bar{z}}$, where $\bar{.}$ denotes the complex
conjugation. Occasionally, we
identify $\mathbbm{C}$ with $\mathbbm{R}^{2}$. The unit circle $\{x\in\mathbbm{R}^2\,|\,\vert x \vert =1\}$ in
$\mathbbm{R}^{2}$ is denoted by $\mathbb{S}^{1}$. Moreover, the elements of $\mathbb{S}^{1}$ are also called
{\em directions}. For a direction $u\in
\mathbb{S}^{1}$, the {\em angle between $u$ and the positive real axis} is understood to be
the unique angle $\theta\in [0,\pi)$ with the property that a
rotation of $1\in\mathbbm{C}$ by $\theta$ in counter-clockwise order is a
direction parallel to $u$. For $r>0$ and $x\in\mathbbm{R}^{2}$,
$B_{r}(x)$ denotes the open ball of radius $r$ about $x$. A subset $\varLambda$ of the plane is called {\em uniformly discrete} if there is a radius
$r>0$ such that every ball $B_{r}(x)$ with $x\in\mathbbm{R}^{2}$ contains at most one point of
  $\varLambda$. Further, $\varLambda$ is called {\em relatively dense} if there is a radius $R>0$
  such that every ball $B_{R}(x)$ with 
  $x\in\mathbbm{R}^{2}$ contains at least one point of $\varLambda$. $\varLambda$ is called a {\em Delone set} if it is both uniformly
  discrete and relatively dense. A direction
$u\in\mathbb{S}^{1}$ is called a $\varLambda${\em-direction} if it is
parallel to a non-zero element of the difference set
$\varLambda-\varLambda$ of $\varLambda$. Further, a bounded subset $C$ of $\varLambda$ is called a {\em convex subset of} $\varLambda$ if its
convex hull contains no new points of $\varLambda$. A {\em
  non-singular affine transformation} of the Euclidean plane is given
by $z \mapsto Az+t$, where $A\in\operatorname{GL}(2,\mathbbm{R})$ and
$t\in \mathbbm{R}^{2}$. Further, recall that a {\em homothety} of the Euclidean plane is given by $z \mapsto \lambda z + t$, where
$\lambda \in \mathbbm{R}$ is positive and $t\in \mathbbm{R}^{2}$. A {\em convex polygon} is the convex hull of a
finite set of points in $\mathbbm{R}^2$. For a subset $S \subset
\mathbbm{R}^2$, a {\em polygon in} $S$ is a convex polygon with all
vertices in $S$. A {\em regular polygon} is always assumed to be
planar, non-degenerate and convex. An {\em affinely regular polygon}
is a non-singular affine image of a regular polygon. In particular, it
must have at least $3$ vertices. Let $U\subset \mathbb{S}^{1}$ be a
finite set of pairwise non-parallel directions. A non-degenerate convex polygon $P$ is
called a $U${\em -polygon} if it has the property that whenever $v$ is
a vertex of $P$ and $u\in U$, the line $\ell_{u}^{v}$ in the plane in
direction $u$ which passes through $v$ also meets another vertex $v'$
of $P$. Then, every direction of $U$ is parallel to one of the edges of
$P$; cf.~\cite[Lemma 5(i)]{DP}. Further, one can easily see that a $U$-polygon has $2m$ edges, where
$m\geq\operatorname{card}(U)$. For example, an affinely regular polygon with an
even number of vertices is a $U$-polygon if and only if each direction
of $U$ is parallel to one of its edges. The following notion of
{\em class} of a $U$-polygon was introduced by Dulio and Peri;
cf.~\cite[Definition 1]{DP}. For $0<c\leq\operatorname{card}(U)$, a
$U$-polygon $P$ is said to be of {\em class} $c$ with respect to $U$ if $c$ is the
maximal number of consecutive edges of $P$ whose directions belong to
$U$. 

\begin{defi}
For a subset $\varLambda\subset\mathbbm{C}$, we denote by $\mathbbm{K}_{\varLambda}$ the intermediate field of $\mathbbm{C}/\mathbbm{Q}$ that is given by 
$$
\mathbbm{K}_{\varLambda}\,:=\,\mathbbm{Q}\left(\big(\varLambda-\varLambda\big)\cup\big(\overline{\varLambda-\varLambda}\big)\right)\,.
$$
Further, we set 
$
\mathbbm{k}_{\varLambda}:=\mathbbm{K}_{\varLambda}\cap\mathbbm{R}$,
the maximal real subfield of $\mathbbm{K}_{\varLambda}$.
\end{defi} 

For $n\in
\mathbbm{N}$, we always let $\zeta_n := e^{2\pi i/n}$, as a specific
choice for a
primitive $n$th root of unity in $\mathbbm{C}$. Denoting by $\phi$ Euler's
totient function, one has the following standard result for the
$n$th cyclotomic field $\mathbbm{Q}(\zeta_n)$.

\begin{fact}[Gau\ss]{\rm \cite[Theorem 2.5]{Wa}}\label{gau}
$[\mathbbm{Q}(\zeta_n) : \mathbbm{Q}] = \phi(n)$. The field extension $\mathbbm{K}_{n}/ \mathbbm{Q}$
is a Galois extension with Abelian Galois group $G(\mathbbm{Q}(\zeta_n)/
\mathbbm{Q}) \simeq (\mathbbm{Z} / n\mathbbm{Z})^{\times}$,
where $a\, (\textnormal{mod}\, n)$ corresponds to the automorphism given by\/ $\zeta_n \mapsto \zeta_n^{a}$. 
\end{fact}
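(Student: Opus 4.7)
The plan is to prove the three assertions by establishing the irreducibility over $\mathbbm{Q}$ of the $n$th cyclotomic polynomial
\[
\Phi_n(x) \,:=\, \prod_{\substack{1\leq a\leq n\\ \gcd(a,n)=1}}(x-\zeta_n^a)\,.
\]
First I would verify that $\Phi_n\in\mathbbm{Z}[x]$ by induction on $n$, using the factorization $x^n-1=\prod_{d\mid n}\Phi_d(x)$: the base case $\Phi_1(x)=x-1$ is trivial, and the inductive step follows from polynomial division in $\mathbbm{Z}[x]$ by the monic integer polynomial $\prod_{d\mid n,\, d<n}\Phi_d(x)$. Since $\zeta_n$ is a root of $\Phi_n$ and $\deg\Phi_n=\phi(n)$, it then suffices to show that $\Phi_n$ is the minimal polynomial of $\zeta_n$ over $\mathbbm{Q}$ in order to conclude $[\mathbbm{Q}(\zeta_n):\mathbbm{Q}]=\phi(n)$.

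The hard part, and the heart of the proof, is the irreducibility of $\Phi_n$. I would use the classical $p$-trick: let $f\in\mathbbm{Z}[x]$ be the minimal polynomial of $\zeta_n$ over $\mathbbm{Q}$ (which lies in $\mathbbm{Z}[x]$ because $\zeta_n$ is an algebraic integer), and write $x^n-1=f(x)g(x)$ with $g\in\mathbbm{Z}[x]$. For any rational prime $p\nmid n$, I would show that $\zeta_n^p$ is also a root of $f$. Suppose instead that $g(\zeta_n^p)=0$; then $f(x)\mid g(x^p)$ in $\mathbbm{Z}[x]$, and reducing modulo $p$ and invoking the Frobenius identity $g(x^p)\equiv g(x)^p\pmod{p}$ yields that $\bar f$ and $\bar g$ share an irreducible factor in $\mathbbm{F}_p[x]$. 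This forces $x^n-1$ to have a repeated root in $\overline{\mathbbm{F}_p}$, contradicting $\gcd(x^n-1,\,nx^{n-1})=1$ in $\mathbbm{F}_p[x]$ since $p\nmid n$. Iterating this step along a prime factorization of an arbitrary $a$ coprime to $n$ shows that every primitive $n$th root of unity is a root of $f$, so $\Phi_n\mid f$ and consequently $f=\Phi_n$.

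Once irreducibility is established, the Galois statement follows routinely. Since $\mathbbm{Q}(\zeta_n)$ is the splitting field of the separable polynomial $x^n-1$ over $\mathbbm{Q}$, the extension $\mathbbm{Q}(\zeta_n)/\mathbbm{Q}$ is Galois of degree $\phi(n)$. Any $\sigma\in\operatorname{Gal}(\mathbbm{Q}(\zeta_n)/\mathbbm{Q})$ must send $\zeta_n$ to a root of its minimal polynomial $\Phi_n$, and hence to some $\zeta_n^{a}$ with $\gcd(a,n)=1$; the assignment $\sigma\mapsto a\,(\textnormal{mod}\,n)$ defines an injective group homomorphism into the abelian group $(\mathbbm{Z}/n\mathbbm{Z})^{\times}$. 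A cardinality comparison, both sides having $\phi(n)$ elements, then yields the claimed isomorphism and simultaneously forces the Galois group to be Abelian.
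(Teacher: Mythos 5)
The paper states this result as a classical \emph{Fact} and offers no proof of its own, merely citing \cite[Theorem 2.5]{Wa}, so there is no internal argument to compare against. Your proof is correct and complete: the induction showing $\Phi_n\in\mathbbm{Z}[x]$, the Dedekind mod-$p$ argument (if $\zeta_n^p$ were a root of the cofactor $g$, then $\bar f$ and $\bar g$ would share a factor over $\mathbbm{Z}/p\mathbbm{Z}$, forcing a repeated root of $x^n-1$, impossible for $p\nmid n$), the iteration over the prime factors of $a$, and the final counting argument identifying the Galois group with $(\mathbbm{Z}/n\mathbbm{Z})^{\times}$ are all carried out correctly. It is worth noting that the route you take is genuinely different from the one in the cited source: Washington deduces the degree formula from ramification theory (total ramification of $p$ in $\mathbbm{Q}(\zeta_{p^k})$, obtained from $\Phi_{p^k}(1)=p$, combined with the arithmetic disjointness of the prime-power subfields), whereas your argument is the elementary Kronecker--Dedekind irreducibility proof, which needs nothing beyond Gauss's lemma and separability of $x^n-1$ over $\mathbbm{Z}/p\mathbbm{Z}$. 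Both are standard; yours is the more self-contained, while Washington's yields the splitting behaviour of primes as a by-product, which is why his book organizes the material that way.
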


It is well known that
$\mathbbm{Q}(\zeta_n+\bar{\zeta}_{n})$ is the maximal real subfield of
$\mathbbm{Q}(\zeta_n)$ and is of degree $\phi(n)/2$ over $\mathbbm{Q}$; see~\cite{Wa}. Throughout this text, we shall use the
notation $$\mathbbm{K}_{n}=\mathbbm{Q}(\zeta_n),\;
\mathbbm{k}_{n}=\mathbbm{Q}(\zeta_n+\bar{\zeta}_{n}),\; \mathcal{O}_{n}=\mathbbm{Z}[\zeta_n],\; \thinspace\scriptstyle{\mathcal{O}}\displaystyle_{n}
=\mathbbm{Z}[\zeta_n+\bar{\zeta}_{n}]\,.$$ 
Note that that $\mathcal{O}_{n}$ and $\thinspace\scriptstyle{\mathcal{O}}\displaystyle_{n}$ are the rings of
integers in $\mathbbm{K}_{n}$ and $\mathbbm{k}_{n}$, respectively;
cf.~\cite[Theorem 2.6 and Proposition 2.16]{Wa}. For $n$ odd,
one has $\phi(2n)=\phi(n)$ by the multiplicativity of the arithmetic
function $\phi$ and thus
 $\mathbbm{K}_{n}=\mathbbm{K}_{2n}$; cf. Fact~\ref{gau}.

\begin{defi}\label{algdeldef}
For a set $\varLambda\subset\mathbbm{R}^2$, we define the following properties:
\begin{eqnarray*}
\mbox{{\rm (Alg)}}&\hphantom{a}&\left[\mathbbm{K}_{\varLambda}:\mathbbm{Q}\right]<\infty\,.\\
\mbox{{\rm (Aff)}}&\hphantom{a}&\mbox{For all finite subsets $F$ of
  $\mathbbm{K}_{\varLambda}$, there is a non-singular affine}\\&&
\mbox{transformation $\Psi$ of the plane such that
$\Psi(F)\subset \varLambda$\,.}\\
\mbox{{\rm (Hom)}}&\hphantom{a}&\mbox{For all finite subsets $F$ of
  $\mathbbm{K}_{\varLambda}$, there is a homothety $h$ of the}\\&&
\mbox{plane such that
$h(F)\subset \varLambda$\,.}
\end{eqnarray*}
Moreover, we call $\varLambda$ \emph{degenerate} if and only if\/ 
$\mathbbm{K}_{\varLambda}$ is a subfield of $\mathbbm{R}$.
\end{defi} 

\begin{rem}\label{mrs}
For any
non-degenerate $\varLambda\subset\mathbbm{R}^2$, the field $\mathbbm{K}_{\varLambda}$ is a
complex extension of $\mathbbm{Q}$. Trivially, property {\rm (Hom)} implies property {\rm (Aff)}. If $\varLambda$ satisfies property {\rm (Alg)}, then one has
$\left[\mathbbm{k}_{\varLambda}:\mathbbm{Q}\right]<\infty$, meaning that $\mathbbm{k}_{\varLambda}$ is a real algebraic number field.
\end{rem}

We need the following result of Darboux~\cite{D} on
second mid-point polygons, where the {\em midpoint polygon} $M(P)$ of a
convex polygon $P$ is the convex polygon whose vertices are the
midpoints of the edges of $P$; compare also~\cite[Lemma 5]{GM} or~\cite[Lemma
  1.2.9]{G}.  

\begin{fact}\label{darboux}
Let $P_0$ be a convex $n$-gon in $\mathbbm{R}^2$ with centroid at the
origin. For each $k\in\mathbbm{N}$, define
$P_k:=\operatorname{sec}(\pi /n)M(P_{k-1})$. Then the sequence
$(P_{2k})_{k=0}^{\infty}$ converges in the Hausdorff metric to an
affinely regular polygon.
\end{fact}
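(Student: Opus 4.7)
The plan is to identify $\mathbbm{R}^2$ with $\mathbbm{C}$ and view the midpoint construction as a circulant linear operator on $\mathbbm{C}^n$. Listing the vertices of $P_0$ in cyclic order as the column vector $v\in\mathbbm{C}^n$, the passage to the midpoint polygon corresponds to multiplication by $A:=\tfrac{1}{2}(I+S)$, where $S$ is the cyclic shift sending $(v_0,\dots,v_{n-1})^{T}$ to $(v_1,\dots,v_{n-1},v_0)^{T}$. Consequently the vertex vector of $P_k$ equals $v^{(k)}=(\sec(\pi/n)\,A)^{k}v$, and the assertion reduces to analysing the long-time behaviour of this matrix iteration.

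I would diagonalise $S$ in the discrete Fourier basis $f_j:=(1,\omega^{j},\dots,\omega^{(n-1)j})^{T}$, with $\omega:=e^{2\pi i/n}$, so that $A$ has eigenvalues $\mu_j=\tfrac{1}{2}(1+\omega^{j})=e^{i\pi j/n}\cos(\pi j/n)$. After normalisation these become $\tilde\mu_j:=e^{i\pi j/n}\cos(\pi j/n)/\cos(\pi/n)$, and the central modulus estimates are $|\tilde\mu_0|=\sec(\pi/n)>1$, $|\tilde\mu_{1}|=|\tilde\mu_{n-1}|=1$, while $|\tilde\mu_j|<1$ for $2\le j\le n-2$. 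The hypothesis that the centroid is at the origin is precisely the vanishing of the zeroth Fourier coefficient $c_0$, so the only unstable mode never enters the iteration, and all interior harmonics decay geometrically.

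Expanding $v=\sum_{j=0}^{n-1}c_j f_j$ and iterating yields
\[
v_\ell^{(k)}\;=\;c_1\,\omega^{\ell}\,e^{i\pi k/n}+c_{n-1}\,\omega^{-\ell}\,e^{-i\pi k/n}+R_\ell^{(k)},
\]
with $\sup_{\ell}|R_\ell^{(k)}|\to 0$ as $k\to\infty$. Define the $\mathbbm{R}$-linear map $F(z):=c_1 z+c_{n-1}\bar{z}$, which has real Jacobian determinant $|c_1|^2-|c_{n-1}|^2$. Restricting to even indices $k=2m$ (as is done in the statement) replaces $e^{i\pi k/n}$ by $\omega^{m}$, giving $v_\ell^{(2m)}=F(\omega^{\ell+m})+R_\ell^{(2m)}$. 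Since $\{\omega^{\ell+m}:0\le\ell<n\}=\{\omega^{j}:0\le j<n\}$ independently of $m$, the (unordered) vertex set of $P_{2m}$ converges in the Hausdorff metric to the fixed set $\{F(\omega^{j}):0\le j<n\}$, i.e.\ the image of a regular $n$-gon under the non-singular affine map $F$, which is an affinely regular $n$-gon. The odd iterates carry an additional half-step rotation and need not stabilise, which explains the passage to the subsequence $(P_{2k})$ in the statement.

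The step I expect to be hardest is excluding the degenerate cases in which the proposed limit fails to be a genuine affinely regular polygon: either $c_1=c_{n-1}=0$, in which case everything collapses to the origin, or $c_1,c_{n-1}\neq 0$ with $|c_1|=|c_{n-1}|$, in which case $F$ has real rank one and the limit degenerates to a line segment. Both scenarios must be ruled out from the convexity of $P_0$: if $c_1=c_{n-1}=0$, the vertex polynomial $P(z):=\sum_{j=0}^{n-1}c_j z^j$ has a zero of order $\ge 2$ at the origin, and a standard argument-principle count then forces the closed polygonal path $\ell\mapsto P(\omega^\ell)$ to wind at least twice about the origin, contradicting the fact that a convex polygon with its centroid in its interior bounds a region of winding number $1$; the subcase $|c_1|=|c_{n-1}|$ with both nonzero is ruled out by the analogous collinearity check, since the leading-order vertices then lie on a single line through the origin and cannot be approximated by those of a genuine convex $n$-gon. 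With these exclusions in place, the proof is essentially linear algebra together with the uniform decay of the remainder terms.
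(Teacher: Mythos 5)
The paper does not prove this statement: it is quoted from Darboux~\cite{D} (compare \cite{GM} and \cite{G}), so the only meaningful comparison is with that literature, and your circulant/discrete-Fourier argument is exactly the standard route taken there. The linear-algebra core of your proposal is correct: the midpoint map acts on vertex vectors as $\tfrac12(I+S)$, the normalised eigenvalues are $e^{i\pi j/n}\cos(\pi j/n)/\cos(\pi/n)$, the centroid hypothesis removes the unique expanding mode $j=0$, the modes $2\le j\le n-2$ decay uniformly, and on the even subsequence the surviving unimodular factors become powers of $\omega$, so the vertex sets converge to $\{c_1z+c_{n-1}\bar z:\, z^n=1\}$ and the polygons converge in the Hausdorff metric.

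The genuine gap is precisely the step you flag as hardest, and neither of your two proposed fixes survives scrutiny. (a) The argument principle controls the winding about $0$ of the analytic curve $\theta\mapsto\sum_jc_je^{ij\theta}$, not of the polygonal path through its values at the $n$-th roots of unity; these are different closed curves whose winding numbers about the origin need not agree, so ``a zero of order $\ge 2$ at the origin forces the polygon to wind twice'' is not yet a proof. (b) More seriously, in the case $|c_1|=|c_{n-1}|\neq 0$ you argue that the limiting segment ``cannot be approximated by those of a genuine convex $n$-gon''; but segments \emph{are} Hausdorff limits of non-degenerate convex $n$-gons, so there is no contradiction there. What convexity gives for free is only the non-strict inequality: the signed area of $P_{2k}$ equals $\tfrac n2\sum_j|c_j|^2\rho_j^{4k}\sin(2\pi j/n)$ with $\rho_j:=|\cos(\pi j/n)|/\cos(\pi/n)$, and positivity for all $k$ together with $k\to\infty$ yields $|c_1|\ge|c_{n-1}|$, not $|c_1|>|c_{n-1}|$. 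The strict inequality for a positively oriented non-degenerate convex $n$-gon is the real content of \cite[Lemma 5]{GM} and requires an argument that genuinely exploits convexity (for instance the strict monotone rotation, through total angle $2\pi$, of the edge directions). Until that is supplied, your argument shows that $(P_{2k})$ converges, but not that the limit is a \emph{non-degenerate} affinely regular polygon --- which is exactly what is needed for Lemma~\ref{uaffine} and hence for Theorem~\ref{charac}.
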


If, in the situation of Fact~\ref{darboux}, $P_0$ is a $U$-polygon of
class $c$, then, for all $k$, $P_{2k}$ is a $U$-polygon of
class $c$, whence also $R:=\lim_{k\rightarrow \infty}P_{2k}$ is a $U$-polygon of
class $c$. This proves the next

\begin{lem}\label{uaffine}
Let $U\subset \mathbb{S}^{1}$ be a finite
set of directions and let $0<c\leq\operatorname{card}(U)$. Then, there
exists a $U$-polygon of class $c$ if and only if there is an affinely
regular $U$-polygon of class $c$.
\end{lem}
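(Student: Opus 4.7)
The ``if'' direction is immediate from the definitions, so the plan focuses on the converse. Starting from an arbitrary $U$-polygon $P_0$ of class $c$, I would first translate so that its centroid lies at the origin (neither property is sensitive to translation) and then feed it into Fact~\ref{darboux}. This produces a sequence $(P_k)_{k\geq 0}$ of convex $n$-gons, with $n$ fixed as the number of vertices of $P_0$, whose even-indexed subsequence $(P_{2k})$ converges in the Hausdorff metric to an affinely regular $n$-gon $R$.

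The crux is to verify that each $P_{2k}$ is again a $U$-polygon of class $c$, so that the limit $R$ inherits both properties. Since homothetic scaling by the positive factor $\operatorname{sec}(\pi/n)$ preserves both properties, the essence reduces to showing that the double midpoint map $M^{2}$ carries $U$-polygons of class $c$ on $n$ vertices to $U$-polygons of class $c$ on $n$ vertices. Labelling the vertices cyclically as $v_0,\dots,v_{n-1}$, the vertices of $M^{2}(P)$ are the fixed affine combinations $w'_i=(v_i+2v_{i+1}+v_{i+2})/4$ of three consecutive vertices of $P$. For each $u\in U$, the $U$-polygon property for $P$ says the orthogonal projections of the $v_j$ onto the line perpendicular to $u$ pair up; by linearity of projection I would transfer this pairing to the $w'_j$ via a suitable fixed shift of indices inherited from the pairing permutation for $P$. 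The class-$c$ condition, being a cyclic combinatorial statement about edge directions, propagates because each edge of $M^{2}(P)$ is a fixed $\R$-linear combination of four consecutive edges of $P$; after rescaling, this common template preserves the maximal length of a run of consecutive $U$-directed edges.

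Once the preservation step is in place, iteration settles the claim for all $P_{2k}$. Hausdorff convergence of convex $n$-gons is equivalent to coordinatewise convergence of their vertex sequences, so both the $U$-polygon property (a collection of linear coincidences among projections of vertices) and the class-$c$ property (the cyclic arrangement of edge directions) pass to the limit $R$. Since $R$ is affinely regular by Fact~\ref{darboux}, this delivers the sought affinely regular $U$-polygon of class $c$. The hard part will be the preservation claim for $M^{2}$, which requires careful tracking of the pairing permutations attached to each $u \in U$ and of the cyclic edge-direction pattern encoding the class; the remainder of the argument is a routine continuity and bookkeeping exercise.
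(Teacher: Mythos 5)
Your proposal takes essentially the same route as the paper: the paper likewise feeds the given $U$-polygon of class $c$ into Fact~\ref{darboux}, asserts that each $P_{2k}$ is again a $U$-polygon of class $c$, and passes to the Hausdorff limit to obtain the affinely regular one. The preservation step under the doubled midpoint map, which you correctly isolate as the crux (and which ultimately rests on the vertex pairing $v_a\leftrightarrow v_b$, $a+b\equiv 2p+1$, shifting by one index), is exactly the point the paper itself leaves to the cited arguments of \cite{GM} and \cite{G}, so your plan matches the intended proof at the same level of detail.
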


Let $(t_1,t_2,t_3,t_4)$ be an ordered tuple of four distinct
elements of the set $\mathbbm{R}\cup\{\infty\}$. Then, its {\em cross ratio}
$\langle t_1,t_2,t_3,t_4\rangle$ is defined by
$$
\langle t_1,t_2,t_3,t_4\rangle := \frac{(t_3 - t_1)(t_4 - t_2)}{(t_3 - t_2)(t_4 - t_1)}\,,
$$
with the usual conventions if one of the $t_i$ equals $\infty$, thus
$\langle t_1,t_2,t_3,t_4\rangle\in \mathbbm{R}$. The following
property of cross ratios of slopes $s_z$ of elements $z\in\mathbbm{R}^2\setminus\{0\}$ is standard.

\begin{fact}\label{crossratio}
Let $z_{j}\in \mathbbm{R}^2\setminus\{0\}$, $j\in \{1,\dots,4\}$, be four pairwise
non-parallel elements of the Euclidean plane and let $A\in\operatorname{GL}(2,\mathbbm{R})$. Then, one has
$$
\langle s_{z_{1}},s_{z_{2}},s_{z_{3}},s_{z_{4}}\rangle = \langle s_{A z_1},s_{A z_2},s_{A
  z_3},s_{A z_4}\rangle\,.
$$  
\end{fact}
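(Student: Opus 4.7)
The plan is to reduce the invariance of the slope cross ratio to the classical Möbius invariance of the cross ratio on $\mathbbm{R}\cup\{\infty\}$. First I would fix coordinates, writing $A=\begin{pmatrix}a&b\\c&d\end{pmatrix}\in\operatorname{GL}(2,\mathbbm{R})$, and note that for any $z=(x,y)^{\top}\in\mathbbm{R}^{2}\setminus\{0\}$ with slope $s_{z}:=y/x\in\mathbbm{R}\cup\{\infty\}$ one has
$$
s_{Az}\,=\,\frac{cx+dy}{ax+by}\,=\,\frac{c+d\,s_{z}}{a+b\,s_{z}}\,,
$$
with the usual projective conventions whenever $s_{z}=\infty$ or a denominator vanishes. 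Thus the induced map $\mu\colon s_{z}\mapsto s_{Az}$ is the Möbius (linear fractional) transformation of $\mathbbm{R}\cup\{\infty\}$ associated with the matrix $\begin{pmatrix}d&c\\b&a\end{pmatrix}$, whose determinant equals $-\det(A)\neq 0$; in particular, $\mu$ is a bijection of $\mathbbm{R}\cup\{\infty\}$.

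Next I would observe that, since the $z_{j}$ are pairwise non-parallel, the slopes $s_{z_{1}},\dots,s_{z_{4}}$ are four distinct elements of $\mathbbm{R}\cup\{\infty\}$, and by invertibility of $A$ the same holds for $s_{Az_{1}},\dots,s_{Az_{4}}$. Both cross ratios in the assertion are therefore well defined, and the claim reduces to the statement that the cross ratio
$$
\langle t_{1},t_{2},t_{3},t_{4}\rangle\,=\,\frac{(t_{3}-t_{1})(t_{4}-t_{2})}{(t_{3}-t_{2})(t_{4}-t_{1})}
$$
is invariant under every Möbius transformation of $\mathbbm{R}\cup\{\infty\}$. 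This standard fact I would verify by decomposing $\mu$ into the elementary transformations generating the Möbius group, namely translations $t\mapsto t+\gamma$, non-zero scalings $t\mapsto\lambda\,t$, and the inversion $t\mapsto 1/t$. Invariance under the first two types is immediate from the formula, while for the inversion one uses the identity $1/t_{i}-1/t_{j}=(t_{j}-t_{i})/(t_{i}t_{j})$, so that all factors $t_{i}t_{j}$ cancel pairwise between numerator and denominator.

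The only genuinely delicate point, and hence the main obstacle, is the careful bookkeeping of the various limiting cases in which some $s_{z_{j}}$ equals $\infty$ or in which an intermediate Möbius image lands at the point at infinity. These are best handled uniformly by working projectively, i.e.\ by identifying $\mathbbm{R}\cup\{\infty\}$ with the real projective line and using the usual continuity conventions for the cross ratio already recalled just before the statement; in each such case a routine limit argument produces the correct value. With those conventions in place, combining the two steps above yields the desired equality $\langle s_{z_{1}},s_{z_{2}},s_{z_{3}},s_{z_{4}}\rangle = \langle s_{Az_{1}},s_{Az_{2}},s_{Az_{3}},s_{Az_{4}}\rangle$.
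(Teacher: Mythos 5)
The paper offers no proof of this Fact at all: it is introduced with the remark that the property ``is standard'' and no proof environment follows, so there is nothing in the paper to compare your argument against. Your proof is correct and is the expected standard argument: writing $A=\left(\begin{smallmatrix}a&b\\c&d\end{smallmatrix}\right)$, the induced map on slopes is the linear fractional transformation $t\mapsto(c+dt)/(a+bt)$; the four slopes are distinct precisely because the $z_j$ are pairwise non-parallel (and remain so under the invertible $A$); and the cross ratio on $\mathbbm{R}\cup\{\infty\}$ is invariant under M\"obius transformations, which you verify on the generators (translations, non-zero scalings, inversion). Two small remarks. First, the determinant of your matrix $\left(\begin{smallmatrix}d&c\\b&a\end{smallmatrix}\right)$ is $da-cb=\det(A)$, not $-\det(A)$; this slip is harmless, since only the non-vanishing of the determinant is used. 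Second, the bookkeeping at $\infty$ is lighter than you suggest: because the $z_j$ are pairwise non-parallel, at most one of the four slopes equals $\infty$ before applying $A$ and at most one after, and the conventions recalled in the paper just before the Fact cover exactly this situation, so the ``routine limit argument'' can be replaced by a direct evaluation in each of the finitely many configurations.
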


\begin{lem}{\rm \cite[Fact 4.7]{H1}}\label{cr}
For a set $\varLambda\subset\mathbbm{R}^2$, the
cross ratio of slopes of four pairwise non-parallel
$\varLambda$-directions is an element of the field $\mathbbm{k}_{\varLambda}$.
\end{lem}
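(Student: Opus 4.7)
The plan is to represent each of the four $\varLambda$-directions by a convenient nonzero vector in the difference set $\varLambda - \varLambda$, so that everything in sight is forced into $\mathbbm{K}_{\varLambda}$ by definition, and then to exploit the bihomogeneous shape of the cross ratio to kill the unavoidable factor of $i$ that appears in a single slope.

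First, I would pick $z_{1}, z_{2}, z_{3}, z_{4} \in (\varLambda - \varLambda) \setminus \{0\}$, one parallel to each of the four given pairwise non-parallel $\varLambda$-directions; since a slope depends only on a direction, $s_{u_{j}} = s_{z_{j}}$. By the definition of $\mathbbm{K}_{\varLambda}$, every $z_{j}$ and every $\bar z_{j}$ lies in $\mathbbm{K}_{\varLambda}$. Using $\mathrm{Re}(z) = (z+\bar z)/2$ and $\mathrm{Im}(z) = (z - \bar z)/(2i)$ to expand the real slope gives
$$
s_{z_{j}} \;=\; -i\,\frac{z_{j} - \bar z_{j}}{z_{j} + \bar z_{j}}\,,
$$
which need \emph{not} itself lie in $\mathbbm{K}_{\varLambda}$ because of the leading $i$; this is the whole reason the lemma requires proof.

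The crucial calculation is a single difference of slopes: clearing denominators yields
$$
s_{z_{k}} - s_{z_{j}} \;=\; -2i\,\frac{z_{k}\bar z_{j} - z_{j}\bar z_{k}}{(z_{k} + \bar z_{k})(z_{j} + \bar z_{j})}\,.
$$
In the cross ratio $\langle s_{z_{1}},s_{z_{2}},s_{z_{3}},s_{z_{4}}\rangle = (s_{z_{3}}-s_{z_{1}})(s_{z_{4}}-s_{z_{2}})/[(s_{z_{3}}-s_{z_{2}})(s_{z_{4}}-s_{z_{1}})]$ the factor $-2i$ appears exactly twice above and twice below and therefore cancels, and the product $(z_{1}+\bar z_{1})(z_{2}+\bar z_{2})(z_{3}+\bar z_{3})(z_{4}+\bar z_{4})$ cancels between the numerator and denominator for the same reason. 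What remains is
$$
\langle s_{z_{1}},s_{z_{2}},s_{z_{3}},s_{z_{4}}\rangle \;=\; \frac{(z_{3}\bar z_{1} - z_{1}\bar z_{3})(z_{4}\bar z_{2} - z_{2}\bar z_{4})}{(z_{3}\bar z_{2} - z_{2}\bar z_{3})(z_{4}\bar z_{1} - z_{1}\bar z_{4})}\,,
$$
which visibly lies in $\mathbbm{K}_{\varLambda}$. Since the cross ratio of four real numbers is real, the common value belongs to $\mathbbm{K}_{\varLambda} \cap \mathbbm{R} = \mathbbm{k}_{\varLambda}$, as claimed.

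The main obstacle is purely conceptual rather than computational: individual slopes need not lie in $\mathbbm{K}_{\varLambda}$, so the statement is not a tautology — one has to see precisely which algebraic feature of the cross ratio (its bihomogeneity of degree $(1,1)$ in the four differences) is what forces the spurious factor of $i$ to cancel. Trying to reduce via the $\operatorname{GL}(2,\mathbbm{R})$-invariance of Fact~\ref{crossratio} first, by normalising the $z_{j}$, does not obviously help, since in any other basis the new slopes will also fail in general to lie in $\mathbbm{K}_{\varLambda}$.
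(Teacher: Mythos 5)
Your argument is correct, and in substance it is surely the intended one; note, however, that this paper does not prove the lemma at all --- it is imported verbatim as \cite[Fact 4.7]{H1} --- so there is no internal proof to compare against. Your computation is the natural route: writing $s_{z_j}=-i(z_j-\bar z_j)/(z_j+\bar z_j)$ with $z_j,\bar z_j\in\mathbbm{K}_{\varLambda}$, observing that $s_{z_k}-s_{z_j}=-2i\,(z_k\bar z_j-z_j\bar z_k)/\bigl((z_k+\bar z_k)(z_j+\bar z_j)\bigr)$, and letting the factors $(-2i)^2$ and $\prod_j(z_j+\bar z_j)$ cancel in the cross ratio leaves an expression manifestly in $\mathbbm{K}_{\varLambda}$, which is real and hence in $\mathbbm{k}_{\varLambda}$. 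You correctly identify the one conceptual point (individual slopes need not lie in $\mathbbm{K}_{\varLambda}$), and the denominators $z_k\bar z_j-z_j\bar z_k=2i\,\mathrm{Im}(z_k\bar z_j)$ are non-zero precisely because the four directions are pairwise non-parallel --- worth saying explicitly.

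The only genuine (though minor and easily repaired) gap is the vertical direction: if some $z_j$ is purely imaginary, then $z_j+\bar z_j=0$, the slope is $\infty$, and your intermediate formulas divide by zero, so the cancellation argument as written does not apply. At most one of the four directions can be vertical, and the final expression
$\bigl((z_3\bar z_1-z_1\bar z_3)(z_4\bar z_2-z_2\bar z_4)\bigr)/\bigl((z_3\bar z_2-z_2\bar z_3)(z_4\bar z_1-z_1\bar z_4)\bigr)$
still equals the cross ratio under the stated conventions for $\infty$ (verify directly, or perturb and pass to the limit); alternatively, pre-compose with a rotation and invoke Fact~\ref{crossratio}, noting that a rotation maps $\varLambda-\varLambda$ into a set generating the same field only up to the rotation factor, so the direct verification is cleaner. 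Either way the conclusion stands, but the case should be addressed.
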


\section{The characterization}
\label{sec3}

\begin{theorem}\label{charac}
For a non-degenerate subset $\varLambda$ of the plane with property
{\rm (Aff)} and an even number $m\geq 8$, the
following statements are equivalent:
\begin{itemize}
\item[{\rm (i)}]
There is a $U$-polygon of class $c\geq 4$ in $\varLambda$ with $m$ edges.
\item[{\rm (ii)}]
There is an affinely regular $U$-polygon of class $c\geq 4$ with $m$
edges for a set $U$ of $\varLambda$-directions.
\item[{\rm (iii)}]
$\mathbbm{k}_{m/2}\subset\mathbbm{k}_{\varLambda}$.
\item[{\rm (iv)}]
There is an affinely regular polygon in $\varLambda$ with $\operatorname{lcm}(m/2,2)$ edges.
\end{itemize}
If $\varLambda$ additionally fulfils property {\rm (Alg)}, then the above assertions only hold for finitely many values of $m$. 
\end{theorem}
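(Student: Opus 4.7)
The plan is to prove the equivalences via the cycle $(i)\Rightarrow(ii)\Rightarrow(iii)\Rightarrow(iv)\Rightarrow(i)$ and then derive the finiteness addendum. The step $(i)\Rightarrow(ii)$ should be immediate from Lemma~\ref{uaffine}: the set $U$ witnessing (i) consists of $\varLambda$-directions, and the lemma produces an affinely regular $U$-polygon of the same class and edge count.

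For $(ii)\Rightarrow(iii)$, my plan is a cross-ratio calculation. Write the affinely regular $U$-polygon as $P=AR+t$ for a regular $m$-gon $R$ and $A\in\operatorname{GL}(2,\mathbbm{R})$, and pick four consecutive edges of $P$ with directions $u_1,\ldots,u_4\in U$, which are therefore $\varLambda$-directions. Their pre-images under $A^{-1}$ are four consecutive edge directions of $R$, at angles in arithmetic progression with common difference $\pi/(m/2)$. Using Fact~\ref{crossratio} and the identity $\tan x-\tan y=\sin(x-y)/(\cos x\cos y)$, the cross ratio of slopes reduces to
$$
\frac{\sin^2(2\pi/(m/2))}{\sin(\pi/(m/2))\,\sin(3\pi/(m/2))} = \frac{2\bigl(1+\cos(2\pi/(m/2))\bigr)}{1+2\cos(2\pi/(m/2))},
$$
a Möbius image of $\cos(2\pi/(m/2))$ with rational coefficients, hence a generator of $\mathbbm{k}_{m/2}=\mathbbm{Q}(\cos(2\pi/(m/2)))$ over $\mathbbm{Q}$. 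Lemma~\ref{cr} places this value in $\mathbbm{k}_\varLambda$, yielding $\mathbbm{k}_{m/2}\subset\mathbbm{k}_\varLambda$.

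The equivalence $(iii)\Leftrightarrow(iv)$ should follow from the characterization of affinely regular polygons in $\varLambda$ proved in~\cite{H}: under non-degeneracy and (Aff), an affinely regular $N$-gon exists in $\varLambda$ iff $\mathbbm{k}_N\subset\mathbbm{k}_\varLambda$. Since $\mathbbm{K}_{2n}=\mathbbm{K}_n$ for odd $n$, one verifies that $\mathbbm{k}_{\operatorname{lcm}(m/2,2)}=\mathbbm{k}_{m/2}$ in both parities of $m/2$, which closes the equivalence.

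Finally, $(iv)\Rightarrow(i)$ I would split on the parity of $m/2$. When $m/2$ is odd, $N=m$, so the affinely regular $m$-gon in $\varLambda$ is itself a class-$4$ $U$-polygon in $\varLambda$ with $m$ edges, taking $U$ to be any four consecutive edge directions. When $m/2$ is even, $N=m/2$, and the $(m/2)$-gon by itself has too few edges; the plan is to combine its pairwise vertex differences, which supply $\varLambda$-directions at the cyclotomic angles required for an $m$-gon, with property (Aff) to embed a suitable $m$-vertex configuration of $\mathbbm{K}_\varLambda$ satisfying the $U$-polygon condition into $\varLambda$. This sub-case is where I expect the main obstacle, since the $m$-edged polygon does not descend directly from the smaller $N$-gon and must be built algebraically. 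The finiteness addendum is then immediate from (iii): under (Alg) we have $[\mathbbm{k}_\varLambda:\mathbbm{Q}]<\infty$ by Remark~\ref{mrs}, so the inequality $\phi(m/2)/2=[\mathbbm{k}_{m/2}:\mathbbm{Q}]\leq[\mathbbm{k}_\varLambda:\mathbbm{Q}]$ forced by (iii) can hold for only finitely many $m$.
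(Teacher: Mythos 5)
Your cycle $(i)\Rightarrow(ii)\Rightarrow(iii)\Rightarrow(iv)\Rightarrow(i)$ is exactly the paper's, and three of the four arrows are carried out essentially as in the paper: $(i)\Rightarrow(ii)$ via Lemma~\ref{uaffine}, $(ii)\Rightarrow(iii)$ via the same cross-ratio computation (your value $\tfrac{2(1+\cos(2\pi/(m/2)))}{1+2\cos(2\pi/(m/2))}$ agrees with the paper's $\tfrac{2+\zeta_{m/2}+\bar\zeta_{m/2}}{1+\zeta_{m/2}+\bar\zeta_{m/2}}$, and inverting the M\"obius map is the paper's step $\tfrac{q}{q-1}-2=\zeta_{m/2}+\bar\zeta_{m/2}$), and $(iii)\Rightarrow(iv)$ by citing the affinely regular polygon characterization from~\cite{H} together with $\mathbbm{k}_{m/2}=\mathbbm{k}_{\operatorname{lcm}(m/2,2)}$. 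Your finiteness argument via $\phi(m/2)/2\leq[\mathbbm{k}_{\varLambda}:\mathbbm{Q}]$ is a legitimate (and slightly more direct) variant of the paper's appeal to the finitely many intermediate fields of $\mathbbm{k}_{\varLambda}/\mathbbm{Q}$. The case $m/2$ odd in $(iv)\Rightarrow(i)$ also works; your choice of $U$ as four consecutive edge directions gives class exactly $4$, whereas the paper takes all $m/2$ edge directions and gets class $m/2$ — both suffice.

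The genuine gap is the sub-case $m/2$ even of $(iv)\Rightarrow(i)$, which you flag as an ``obstacle'' but do not resolve: asserting that one can ``embed a suitable $m$-vertex configuration of $\mathbbm{K}_{\varLambda}$ satisfying the $U$-polygon condition'' is precisely the claim that needs proving, since the whole difficulty is to exhibit an $m$-edged $U$-polygon when only an $(m/2)$-gon is available. The paper's construction is concrete: starting from the affinely regular $(m/2)$-gon $P$ in $\varLambda$, attach $m/2$ translates of $P$ edge-to-edge to $P$ (one along each edge; the translation vectors are edge vectors of $P$, hence lie in $\varLambda-\varLambda\subset\mathbbm{K}_{\varLambda}$) and let $P'$ be the convex hull of the resulting point set. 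Then $P'$ is a convex polygon with $m$ edges and is a $U'$-polygon of class $c=\operatorname{card}(U')=m/2$, where $U'$ is the set of $m/2$ pairwise non-parallel $\varLambda$-directions given by the edges \emph{and diagonals} of $P$; property (Aff) then transports $P'$ into $\varLambda$ while preserving the $U$-polygon property and the class. Without this (or an equivalent) explicit construction, the implication $(iv)\Rightarrow(i)$ — and hence the equivalence — is not established for $m\equiv 0\ (\mathrm{mod}\ 4)$.
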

\begin{proof}
Direction (i) $\Rightarrow$ (ii) immediately follows from
Lemma~\ref{uaffine}. For direction (ii) $\Rightarrow$ 
(iii), let $P$ be an affinely regular $U$-polygon of class $c\geq 4$ with $m$
edges for a set $U$ of $\varLambda$-directions. There is then a
non-singular affine transformation $\Psi$ of the plane such that $R_{m} = \Psi(P)$ is
a regular $m$-gon. Since $P$ is a $U$-polygon of class $c\geq 4$ for a set $U$ of
$\varLambda$-directions and since, by Fact~\ref{crossratio}, the cross ratio of slopes of
directions of edges is preserved by non-singular
affine transformations, there are four consecutive edges of $R_m$
whose cross ratio $q$ of slopes of their directions, say arranged in order of
increasing angle with the positive real axis, is an element of
$\mathbbm{k}_{\varLambda}$; cf.~Lemma~\ref{cr}. Applying a suitable
rotation, if necessary, we may assume
that these directions are given in complex form by
$1,\zeta_m,\zeta_m^2$ and $\zeta_m^3$; cf.~Fact~\ref{crossratio} again. Using
$\sin(\theta)=-e^{-i\theta}(1-e^{2i\theta})/2i$, one easily calculates
that 
\begin{eqnarray*}
q&=&\frac{(\tan (\frac{3 \pi}{m/2})-
  \tan (\frac{ \pi}{m/2}))(\tan (\frac{2 \pi}{m/2})- \tan (\frac{0
    \pi}{m/2}))}{(\tan (\frac{3 \pi}{m/2})- \tan (\frac{0
    \pi}{m/2}))(\tan (\frac{2 \pi}{m/2})- \tan (\frac{
    \pi}{m/2}))}=\frac{\sin(\frac{2\pi}{m/2})\sin(\frac{2\pi}{m/2})}{\sin(\frac{\pi}{m/2})\sin(\frac{3\pi}{m/2})}\\&=&\frac{(1-\zeta_{m/2}^2)(1-\zeta_{m/2}^2)}{(1-\zeta_{m/2})(1-\zeta_{m/2}^3)}=\frac{2+\zeta_{m/2}+\bar{\zeta}_{m/2}}{1+\zeta_{m/2}+\bar{\zeta}_{m/2}}\in\mathbbm{k}_{\varLambda}\,.
\end{eqnarray*}
This implies that
$$
\frac{q}{q-1}-2=\zeta_{m/2}+\bar{\zeta}_{m/2}\in\mathbbm{k}_{\varLambda}\,,
$$
the latter being equivalent to (iii). Direction (iii) $\Rightarrow$ 
(iv) is an immediate consequence of~\cite[Theorem
  3.3]{H} in conjunction with the identity $\mathbbm{k}_{m/2}=\mathbbm{k}_{\operatorname{lcm}(m/2,2)}$. For direction (iv) $\Rightarrow$ 
(i), assume first that $m/2$ is odd. Here, we are done since every
affinely regular polygon in $\varLambda$ with
$\operatorname{lcm}(m/2,2)=m$ edges is a $U$-polygon of class $c=
m/2$ with respect to any set $U$ of directions parallel to $m/2$ consecutive of
its edges. If $m/2$ is even, there is an affinely regular polygon $P$ in $\varLambda$ with
$\operatorname{lcm}(m/2,2)=m/2$ edges. Attach $m/2$ translates of $P$
edge-to-edge
to $P$ in the obvious way and consider the convex hull $P'$ of the resulting point
set. Clearly, $P'$ is a $U'$-polygon in $\mathbbm{K}_{\varLambda}$ of class
$c=\operatorname{card}(U')$ with $m$
edges, where $U'$ consists of the $m/2$ pairwise non-parallel $\varLambda$-directions
 given by the edges and diagonals of $P$. By property~(Aff), there is
 a non-singular affine transformation $\Psi$ of the plane such that
 $\Psi(P')$ is a polygon in $\varLambda$. Then, $\Psi(P')$ is a
 $U$-polygon of class $c=\operatorname{card}(U)$ in
 $\varLambda$ with $m$ edges, where $U$ is a set of $m/2$ pairwise
 non-parallel $\varLambda$-directions parallel to the elements of
 $\Psi(U')$. Assertion (i) follows. If $\varLambda$ additionally has
 property~(Alg), then $\mathbbm{k}_{\varLambda}$ is an algebraic
 number field by
 Remark~\ref{mrs}. Thus, the field extension
 $\mathbbm{k}_{\varLambda}/\mathbbm{Q}$ has only finitely many
 intermediate fields and the assertion follows from condition (iii) in
 conjunction with~\cite[Corollary 2.7, Remark~2.8 and Lemma~2.9]{H}.\end{proof}

\begin{coro}\label{numcor}
Let $\mathbbm{L}$ be a complex algebraic number field with
$\overline{\mathbbm{L}}=\mathbbm{L}$ and let
$\mathcal{O}_{\mathbbm{L}}$ be the ring of integers in
$\mathbbm{L}$. Let $\varLambda$ be a translate of $\mathbbm{L}$ or a
translate of $\mathcal{O}_{\mathbbm{L}}$. Further, let $m\geq 8$ be
an even number. Denoting the maximal real subfield of $\mathbbm{L}$ by $\mathbbm{l}$, the following statements are equivalent:
\begin{itemize}
\item[{\rm (i)}]
There is a $U$-polygon of class $c\geq 4$ in $\varLambda$ with $m$ edges.
\item[{\rm (ii)}]
There is an affinely regular $U$-polygon of class $c\geq 4$ with $m$
edges for a set $U$ of $\varLambda$-directions.
\item[{\rm (iii)}]
$\mathbbm{k}_{m/2}\subset\mathbbm{l}$.
\item[{\rm (iv)}]
There is an affinely regular polygon in $\varLambda$ with $\operatorname{lcm}(m/2,2)$ edges.
\end{itemize}
Additionally, the above assertions only hold for finitely many values of $m$.
\end{coro}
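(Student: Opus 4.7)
The plan is to reduce Corollary~\ref{numcor} to Theorem~\ref{charac} by verifying that any $\varLambda$ of the stated form is a non-degenerate set satisfying property~{\rm (Aff)} and in fact also property~{\rm (Alg)}, and by computing that the field $\mathbbm{k}_{\varLambda}$ associated to such a $\varLambda$ coincides with $\mathbbm{l}$. Once these identifications are in place, the equivalence (i)$\Leftrightarrow$(ii)$\Leftrightarrow$(iii)$\Leftrightarrow$(iv) and the finiteness statement will follow immediately from Theorem~\ref{charac} by substituting $\mathbbm{l}$ for $\mathbbm{k}_{\varLambda}$ in condition~(iii).

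First, since $\varLambda$ is a translate of either $\mathbbm{L}$ or $\mathcal{O}_{\mathbbm{L}}$, one has $\varLambda-\varLambda=\mathbbm{L}$ in the former case and $\varLambda-\varLambda=\mathcal{O}_{\mathbbm{L}}$ in the latter. The assumption $\overline{\mathbbm{L}}=\mathbbm{L}$ gives $\overline{\varLambda-\varLambda}=\varLambda-\varLambda$, so
$$
\mathbbm{K}_{\varLambda}\,=\,\mathbbm{Q}\bigl((\varLambda-\varLambda)\cup\overline{(\varLambda-\varLambda)}\bigr)\,=\,\mathbbm{Q}(\varLambda-\varLambda)\,=\,\mathbbm{L},
$$
using in the ring of integers case that $\mathcal{O}_{\mathbbm{L}}$ generates $\mathbbm{L}$ as a $\mathbbm{Q}$-algebra. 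Consequently, $\mathbbm{k}_{\varLambda}=\mathbbm{L}\cap\mathbbm{R}=\mathbbm{l}$. Since $\mathbbm{L}$ is \emph{complex}, meaning $\mathbbm{L}\not\subset\mathbbm{R}$, the set $\varLambda$ is non-degenerate in the sense of Definition~\ref{algdeldef}. Property~{\rm (Alg)} holds because $\mathbbm{L}$ is an algebraic number field, so $[\mathbbm{K}_{\varLambda}:\mathbbm{Q}]=[\mathbbm{L}:\mathbbm{Q}]<\infty$.

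Next, I would verify the stronger property~{\rm (Hom)} (which implies~{\rm (Aff)} by Remark~\ref{mrs}). Let $F\subset\mathbbm{K}_{\varLambda}=\mathbbm{L}$ be finite and write $\varLambda=S+t$ with $S\in\{\mathbbm{L},\mathcal{O}_{\mathbbm{L}}\}$. If $S=\mathbbm{L}$, the homothety $h(z):=z+t$ sends $F$ into $\varLambda$. If $S=\mathcal{O}_{\mathbbm{L}}$, choose a positive integer $n$ clearing the denominators of all elements of $F$, so that $nF\subset\mathcal{O}_{\mathbbm{L}}$; then the homothety $h(z):=nz+t$ sends $F$ into $\varLambda$. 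Either way, property~{\rm (Hom)} holds, hence so does~{\rm (Aff)}.

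With these hypotheses verified and $\mathbbm{k}_{\varLambda}=\mathbbm{l}$ established, Theorem~\ref{charac} applies verbatim and yields the equivalence of (i)--(iv) together with the assertion that these statements hold only for finitely many even $m\geq 8$. No new calculation is required beyond the identifications above, and the only mildly delicate step is the field equation $\mathbbm{K}_{\varLambda}=\mathbbm{L}$ in the ring of integers case, where one has to invoke the fact that $\mathcal{O}_{\mathbbm{L}}$ spans $\mathbbm{L}$ over $\mathbbm{Q}$; this is where I would be most careful, but it is standard.
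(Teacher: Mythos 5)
Your proposal is correct and follows essentially the same route as the paper: the paper's proof is a one-line reduction to Theorem~\ref{charac}, citing \cite[Section 3]{H} for the facts that $\varLambda$ has properties (Aff) and (Alg) with $\mathbbm{K}_{\varLambda}=\mathbbm{L}$ (hence $\mathbbm{k}_{\varLambda}=\mathbbm{l}$). You simply spell out those verifications explicitly (including the stronger property (Hom) via clearing denominators), and the details you supply are accurate.
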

\begin{proof}
This follows immediately from Theorem~\ref{charac} in conjunction with
the fact that $\varLambda$ has properties (Aff) and (Alg) with
$\mathbbm{K}_{\varLambda}=\mathbbm{L}$; cf.~\cite[Section 3]{H}.
\end{proof}

\begin{rem}
In particular, Corollary~\ref{numcor} applies to translates of complex
cyclotomic fields and their rings of integers, respectively, with
$\mathbbm{l}=\mathbbm{k}_n$ for a suitable $n\geq 3$;
cf.~Fact~\ref{gau} and also compare the equivalences of Corollary~\ref{th1mod} below. 
\end{rem}

\section{Application to cyclotomic model sets}
\label{sec4}

Delone subsets of the plane satisfying
properties~(Alg) and (Hom) were introduced as
\emph{algebraic Delone sets} in~\cite[Definition 4.1]{H1}. Note that
algebraic Delone sets are always non-degenerate, since this is true
for all relatively dense subsets of the plane. Examples of algebraic
Delone sets are the so-called \emph{cyclotomic
  model sets} $\varLambda$; cf.~\cite[Proposition 4.31]{H1}. By
definition, any cyclotomic model set
$\varLambda$ is contained in a translate of $\mathcal{O}_n$, where $n\geq 3$,
in which case the $\mathbbm{Z}$-module $\mathcal{O}_n$ is called the \emph{underlying
  $\mathbbm{Z}$-module} of $\varLambda$. More precisely, for $n\geq 3$, let $.^{\star}\!:\, \mathcal{O}_n\rightarrow
(\mathbbm{R}^2)^{\phi(n)/2-1}
$
be any map of the form
$$
z\mapsto
(\sigma_{2}(z),\dots,\sigma_{\phi(n)/2}(z))\,,
$$
where the
set $\{\sigma_{2},\dots,\sigma_{\phi(n)/2}\}$ arises from $G(\mathbbm{K}_{n}/ \mathbbm{Q})\setminus\{\operatorname{id},\bar{.}\}$ by choosing exactly one automorphism
from each pair of complex conjugate ones;
cf.~Fact~\ref{gau}. Then, for any such choice, each translate
of the set $\{z\in \mathcal{O}_n\,|\,z^{\star}\in W\}$, where $W\subset(\mathbbm{R}^2)^{\phi(n)/2-1}$ is a
sufficiently `nice' set with non-empty
interior and compact closure,  
is a cyclotomic model set with underlying $\mathbbm{Z}$-module $\mathcal{O}_n$; cf.~\cite{H0,H2,H1,H} for more details and
properties of (cyclotomic) model sets. Since $\mathcal{O}_n=\mathcal{O}_{2n}$ for odd $n$, we might restrict
ourselves to values $n \not\equiv 2
\;(\operatorname{mod} 4)$ when dealing with
cyclotomic model sets with underlying $\mathbbm{Z}$-module
$\mathcal{O}_n$. With the exception of the
crystallographic cases of translates of the square lattice $\mathcal{O}_4$ and
translates of the triangular lattice $\mathcal{O}_3$, cyclotomic model sets
are aperiodic (they have no non-zero translational symmetries) and have
long-range order; cf.~\cite[Remark 4.23]{H1}. Well-known
examples of cyclotomic model sets with underlying $\mathbbm{Z}$-module $\mathcal{O}_n$
are the vertex sets of aperiodic tilings of the plane like the
Ammann-Beenker tiling~\cite{am,bj,ga} ($n=8$), the T\"ubingen triangle
tiling~\cite{bk1,bk2} ($n=5$) and the shield tiling~\cite{ga}
($n=12$); cf.~Figure~\ref{fig:tilingupolygon} for an illustration. For definitions of the above vertex sets
of aperiodic tilings of the plane in algebraic terms, we refer the
reader to~\cite[Section 1.2.3.2]{H2} or~\cite{H0}. As an immediate
consequence of Theorem~\ref{charac} in conjunction
with~\cite[Corollary 4.1]{H} and the identity $\mathbbm{k}_{m/2}=\mathbbm{k}_{\operatorname{lcm}(m/2,2)}$, one obtains the following

\begin{coro}\label{th1mod}
Let $m,n\in \mathbbm{N}$ with $m\geq 8$ an even number and $n\geq 3$. Further, let $\varLambda$ be a cyclotomic model set with underlying $\mathbbm{Z}$-module $\mathcal{O}_n$. The following statements are equivalent:
\begin{itemize}
\item[{\rm (i)}]
There is a $U$-polygon of class $c\geq 4$ in $\varLambda$ with $m$ edges.
\item[{\rm (ii)}]
There is an affinely regular $U$-polygon of class $c\geq 4$ with $m$
edges for a set $U$ of $\varLambda$-directions.
\item[{\rm (iii)}]
$\mathbbm{k}_{m/2}\subset\mathbbm{k}_{\varLambda}$.
\item[{\rm (iv)}]
There is an affinely regular polygon in $\varLambda$ with $\operatorname{lcm}(m/2,2)$ edges.
\item[{\rm (v)}]
$\mathbbm{k}_{m/2}\subset\mathbbm{k}_{n}$.
\item[{\rm (vi)}]
$m \in \{8,12\}$, or $\mathbbm{K}_{m/2}\subset \mathbbm{K}_{n}$.
\item[{\rm (vii)}]
$m \in \{8,12\}$, or $m|2n$, or $m=4d$ with $d$ an odd divisor of $n$.
\item[{\rm (viii)}]
$m \in \{8,12\}$, or $\mathcal{O}_{m/2}\subset \mathcal{O}_{n}$.
\item[{\rm (ix)}]
$\thinspace\scriptstyle{\mathcal{O}}\displaystyle_{m/2}\subset\thinspace\scriptstyle{\mathcal{O}}\displaystyle_{n}$.
\end{itemize}
\end{coro}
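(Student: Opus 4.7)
The plan is to use Theorem~\ref{charac} directly for the equivalences among (i)--(iv) and (v), and then reduce the remaining conditions to purely algebraic facts about cyclotomic fields and their rings of integers, culminating in an invocation of~\cite[Corollary 4.1]{H}. First, I would record that a cyclotomic model set $\varLambda$ with underlying $\mathbbm{Z}$-module $\mathcal{O}_n$ is a non-degenerate Delone set satisfying (Alg) and (Hom) (hence (Aff)) by~\cite[Proposition 4.31]{H1}, with $\mathbbm{K}_{\varLambda}=\mathbbm{K}_n$ and $\mathbbm{k}_{\varLambda}=\mathbbm{k}_n$. Theorem~\ref{charac} then applies and immediately yields (i) $\Leftrightarrow$ (ii) $\Leftrightarrow$ (iii) $\Leftrightarrow$ (iv); identifying $\mathbbm{k}_{\varLambda}$ with $\mathbbm{k}_n$ gives (iii) $\Leftrightarrow$ (v) and also the finiteness statement about the values of $m$ for which the conditions hold.

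The equivalences (v) $\Leftrightarrow$ (ix) and (vi) $\Leftrightarrow$ (viii) are then essentially tautological: $\thinspace\scriptstyle{\mathcal{O}}\displaystyle_k$ and $\mathcal{O}_k$ are, respectively, the rings of integers in $\mathbbm{k}_k$ and $\mathbbm{K}_k$, so a containment of number fields is equivalent to the corresponding containment of their rings of integers. Likewise, (vi) $\Leftrightarrow$ (vii) is the standard characterization of cyclotomic field inclusions: $\mathbbm{K}_{m/2}\subset\mathbbm{K}_n$ holds iff $m/2\mid n$, except when $m/2\equiv 2\pmod{4}$, in which case one uses the collapse $\mathbbm{K}_{m/2}=\mathbbm{K}_{m/4}$ to get the condition $m/4\mid n$ with $m/4$ odd, i.e.\ $m=4d$ for an odd divisor $d$ of $n$. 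Splitting on the parity of $m/2$ converts $m/2\mid n$ into $m\mid 2n$ and combines the two branches into the disjunction of (vii).

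The crux is (v) $\Leftrightarrow$ (vi), where the exceptional values $m\in\{8,12\}$ appear. The direction (vi) $\Rightarrow$ (v) is immediate: $\mathbbm{K}_{m/2}\subset\mathbbm{K}_n$ forces $\mathbbm{k}_{m/2}\subset\mathbbm{k}_n$ by intersecting with $\mathbbm{R}$, while for $m/2\in\{4,6\}$ one has $\mathbbm{k}_{m/2}=\mathbbm{Q}$, so (v) holds trivially. For the converse, assuming $m\notin\{8,12\}$ I would rewrite $\mathbbm{k}_{m/2}$ as $\mathbbm{k}_{\operatorname{lcm}(m/2,2)}$ so that the index is not $\equiv 2\pmod{4}$ (this is where the identity flagged by the author enters), and then apply~\cite[Corollary 4.1]{H}, which precisely lifts an inclusion of maximal real cyclotomic fields to an inclusion of the corresponding full cyclotomic fields outside these small exceptions. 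The main obstacle is the bookkeeping around the degenerate small-index cases, in particular verifying that the list of exceptions is exactly $\{8,12\}$ and no more, which boils down to checking that $\mathbbm{k}_{m/2}=\mathbbm{Q}$ happens for $m/2\in\{4,6\}$ only.
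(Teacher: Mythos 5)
Your proposal is correct and follows essentially the same route as the paper, which derives the corollary directly from Theorem~\ref{charac}, the identification $\mathbbm{k}_{\varLambda}=\mathbbm{k}_{n}$, the identity $\mathbbm{k}_{m/2}=\mathbbm{k}_{\operatorname{lcm}(m/2,2)}$, and~\cite[Corollary 4.1]{H} for the chain (v)--(ix). You merely spell out more of the cyclotomic bookkeeping (the exceptional set $\{8,12\}$ and the conductor computation behind (vi) $\Leftrightarrow$ (vii)) that the paper delegates to the cited corollary.
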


\begin{rem}
Combining Corollary~\ref{th1mod} and Fact~\ref{crossratio}, one sees
that the cross ratios of slopes of
directions of edges of $U$-polygons of class $c\geq 4$ in cyclotomic
model sets $\varLambda$, say arranged in order of
increasing angle with the positive real axis, easily follow from a
direct computation with a finite number of regular polygons;
cf.~\cite{GG,BDNP} for deep insights into this in the case of planar lattices.\end{rem}

The following
consequence follows immediately from Corollary~\ref{th1mod} in
conjunction with~\cite[Corollary 4.2]{H}. Restricted to values $n \not\equiv 2
\;(\operatorname{mod} 4)$, it deals with the two cases where the degree $\phi(n)/2$ of
$\mathbbm{k}_{n}$ over $\mathbbm{Q}$ is either $1$ or a prime number
$p\in\mathcal{P}$; cf.~\cite[Lemma 2.10]{H}.

\begin{coro}\label{cor2}
Let $m,n\in\mathbbm{N}$ with $m\geq 8$ an even number and $n\geq 3$. Further, let $\varLambda$ be a cyclotomic model set with underlying $\mathbbm{Z}$-module $\mathcal{O}_n$. Then, one has:
\begin{itemize}
\item[{\rm (a)}] If $n\in\{3,4\}$, there is a $U$-polygon of class $c\geq 4$ in $\varLambda$ with $m$ edges if and only if $m \in \{8,12\}$.  
\item[{\rm (b)}] If $n\in\{8,9,12\} \cup \{2p+1\, | \, p \in
  \mathcal{P}_{\rm SG}\}$, there is a $U$-polygon of class $c\geq 4$
  in $\varLambda$ with $m$ edges if and only if 
$$\left\{
\begin{array}{ll}
m \in \{8,12,2n\}, & \mbox{if $n=8$ or $n=12$,}\\
m \in \{8,12,2n,4n\}, & \mbox{otherwise.}
\end{array}\right.
$$ 
\end{itemize}
\end{coro}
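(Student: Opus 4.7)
The plan is to apply Corollary~\ref{th1mod} to convert the hypothesis ``there is a $U$-polygon of class $c\geq 4$ in $\varLambda$ with $m$ edges'' into the arithmetic statement (v), namely $\mathbbm{k}_{m/2}\subset\mathbbm{k}_{n}$, and then to analyse this field inclusion for the special $n$ listed in (a) and (b). Throughout, I would keep in mind the identity $\mathbbm{k}_{m/2}=\mathbbm{k}_{\operatorname{lcm}(m/2,2)}$ and the fact that $\mathbbm{k}_{a}=\mathbbm{k}_{b}$ precisely when $\{a,b\}$ is a single value or consists of an odd number and its double. The unifying observation for the case distinction is that $[\mathbbm{k}_{n}:\mathbbm{Q}]=\phi(n)/2$, which equals $1$ for $n\in\{3,4\}$ (only these when $n\not\equiv 2\pmod 4$) and a rational prime for the remaining $n$ appearing in the corollary; this is precisely the content of the portion of~\cite[Corollary~4.2]{H} alluded to after the corollary statement.

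For part~(a), since $\phi(n)/2=1$ forces $\mathbbm{k}_{n}=\mathbbm{Q}$, condition (v) reduces to $\mathbbm{k}_{m/2}=\mathbbm{Q}$, i.e.\ $\phi(m/2)\leq 2$. Enumerating gives $m/2\in\{1,2,3,4,6\}$, and intersecting with the standing constraints that $m$ be even and $m\geq 8$ produces exactly $m\in\{8,12\}$, matching (a). For part~(b), the field $\mathbbm{k}_{n}$ has prime degree over $\mathbbm{Q}$, so by Galois theory $\mathbbm{k}_{m/2}\subset\mathbbm{k}_{n}$ forces $\mathbbm{k}_{m/2}$ to be either $\mathbbm{Q}$ or $\mathbbm{k}_{n}$ itself. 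The first alternative reproduces $m\in\{8,12\}$ as in (a), and the second alternative, by the remark above on equality of $\mathbbm{k}_a$'s, forces $\operatorname{lcm}(m/2,2)\in\{n,2n\}$ modulo the odd/even convention on $n$. Unwinding this gives $m=2n$ when $n$ is even and $m\in\{2n,4n\}$ when $n$ is odd, which is exactly the dichotomy between $n\in\{8,12\}$ (even) and $n\in\{9\}\cup\{2p+1\mid p\in\mathcal{P}_{\rm SG}\}$ (odd) displayed in (b).

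As a cross-check, I would verify the same outcome via the purely divisor-theoretic condition (vii) of Corollary~\ref{th1mod}: for each of the finitely many $n\in\{3,4,8,9,12\}$ and for $n=2p+1$ with $p$ Sophie Germain, directly list divisors of $n$ and of $2n$, compute $\{m\geq 8\text{ even}\mid m\mid 2n\text{ or }m=4d,\ d\mid n,\ d\text{ odd}\}\cup\{8,12\}$, and confirm it agrees with the sets in (a) and (b). The main obstacle is really bookkeeping: one must be careful with the convention $n\not\equiv 2\pmod 4$ (so that the list of small $n$ with $\phi(n)/2=1$ is $\{3,4\}$ rather than $\{3,4,6\}$), and with the fact that $\mathbbm{k}_{m/2}=\mathbbm{k}_{n}$ allows $m/2\in\{n,2n\}$ only under the correct parity of $n$, which is what produces the extra value $m=4n$ exactly in the odd cases of (b).
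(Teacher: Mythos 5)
Your proposal is correct and follows essentially the same route as the paper, which likewise reduces the statement to Corollary~\ref{th1mod} together with the observation that the degree $\phi(n)/2$ of $\mathbbm{k}_n$ over $\mathbbm{Q}$ is $1$ or a prime for the listed $n$; the paper merely outsources the resulting case analysis to \cite[Corollary 4.2]{H}, which you instead carry out (and cross-check via condition (vii)) directly. One minor caveat: your stated criterion for $\mathbbm{k}_a=\mathbbm{k}_b$ overlooks the degenerate coincidences $\mathbbm{k}_1=\mathbbm{k}_2=\mathbbm{k}_3=\mathbbm{k}_4=\mathbbm{k}_6=\mathbbm{Q}$, but these do not occur where you invoke it, since there the common field has prime degree at least $2$.
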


\begin{figure}
\centerline{\epsfysize=0.57\textwidth\epsfbox{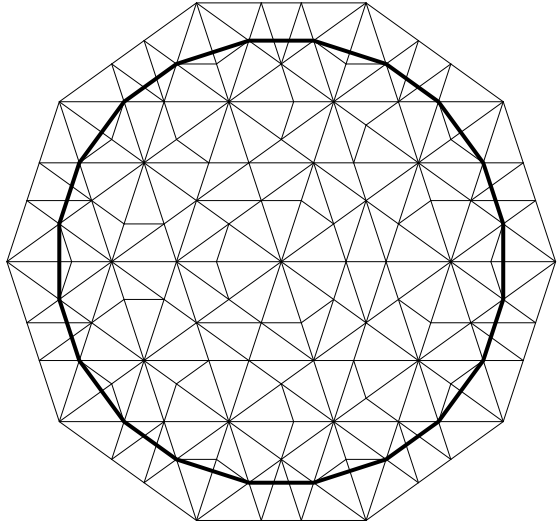}}
\caption{An $U$-icosagon of class $c=\operatorname{card}(U)=10$ in
  the vertex set $\varLambda_{\rm TTT}$ of the
  T\"ubingen triangle tiling with respect to the set $U$ of
  $\varLambda_{\rm TTT}$-directions
 given by the edges and diagonals of the central regular decagon.}
\label{fig:tilingupolygon}
\end{figure}

\begin{ex}
As mentioned above, the vertex set $\varLambda_{\rm TTT}$ of the T\"ubingen triangle
tiling is a cyclotomic model set with underlying $\mathbbm{Z}$-module
$\mathcal{O}_5$. By Corollary~\ref{cor2} there is a
$U$-polygon of class $c\geq 4$ in $\varLambda_{\rm TTT}$ with $m$
edges if and only if $m\in\{8,10,12,20\}$; see
Figure~\ref{fig:tilingupolygon} for an $U$-icosagon of class $c=10$ in
$\varLambda_{\rm TTT}$.
\end{ex}

\section*{Acknowledgements}
It is a pleasure to thank Michael Baake, Richard J. Gardner and Uwe
Grimm for their cooperation and for useful hints on the manuscript.


%

\end{document}